\theoremstyle{plain}
\newtheorem{theorem}{Theorem}
\newtheorem{conjecture}[theorem]{Conjecture}
\newtheorem{lemma}[theorem]{Lemma}
\newtheorem{corollary}[theorem]{Corollary}
\theoremstyle{definition}
\newtheorem{problem}[theorem]{Problem}
\theoremstyle{remark}
\newtheorem{remark}[theorem]{Remark}
\DeclareMathOperator{\tr}{tr}
\DeclareMathOperator{\rank}{rank}
\title[Almost-equidistant sets]{{On almost-equidistant sets - II}}
\author[A.~Polyanskii]{{A.~Polyanskii}}%
\address{Alexandr Polyanskii,
\newline\hphantom{iii} \href{https://mipt.ru/english/}{MIPT}, \href{http://cmcagu.ru/}{CMC ASU}, \href{http://iitp.ru/en/about}{IITP RAS}
}
\email{\href{mailto:alexander.polyanskii@yandex.ru}{alexander.polyanskii@yandex.ru}}
\urladdr{\url{http://polyanskii.com}}
\keywords{Equidistant sets, almost-equidistant sets, unit distance graph, diameter graph, triangle-free graph, Perron-Frobenius Theorem, distance problem}
\subjclass[2010]{51K99, 05C50, 51F99, 52C99, 05A99}
\begin{document}

\thispagestyle{empty}

\begin{abstract}
A set in $\mathbb R^d$ is called \textit{almost-equidistant} if for any three distinct points in the set, some two are at unit distance apart. First, we give a short proof of the result of Bezdek and L\'{a}ngi claiming that an almost-equidistant set lying on a $(d-1)$-dimensional sphere of radius $r$, where $r<1/\sqrt{2}$, has at most $2d+2$ points. Second, we prove that an almost-equidistant set $V$ in $\mathbb R^d$ has $O(d)$ points in two cases: if the diameter of $V$ is at most $1$ or if $V$ is a subset of a $d$-dimensional ball of radius at most $1/\sqrt{2}+cd^{-2/3}$, where $c<1/2$. Also, we present a new proof of the result of Kupavskii, Mustafa and Swanepoel that an almost-equidistant set in $\mathbb R^d$ has $O(d^{4/3})$ elements.
\end{abstract}

\maketitle

\section{Introduction}
A set in $\mathbb R^d$ is called \textit{almost-equidistant} if among any three points in the set, some two are at unit distance apart. The natural conjecture~\cite{Pol17}*{Conjecture~12} claims that an almost-equidistant set in $\mathbb R^d$ has $O(d)$ points. 

Using an elegant linear algebraic argument, Rosenfeld~\cite{R90} proved that an almost-equidistant set on a $(d-1)$-dimensional sphere of radius $1/\sqrt{2}$ has at most $2d$ points. Note that the set of the vertices of two unit $(d-1)$-simplices lying on the same $(d-1)$-dimensional sphere of radius $1/\sqrt{2}$ is  almost-equidistant. Bezdek and L{\'a}ngi~\cite{BL00}*{Theorem~1} generalized Rosenfeld's approach and showed that an almost-equidistant set on a $(d-1)$-dimensional sphere of radius at most $1/\sqrt{2}$ has at most $2d+2$ points; this bound is tight because the vertices of two unit $d$-simplices inscribed in the same sphere form an almost-equidistant set. 

Balko, P{\'o}r, Scheucher, Swanepoel and Valtr~\cite{BPSSV16}*{Theorem~6} showed that an almost equidistant set in $\mathbb R^d$ has $O(d^{3/2})$ points. This bound was improved by the author~\cite{Pol17}*{Theorem~1} to $O(d^{13/9})$. Recently, Kupavskii, Mustafa, Swanepoel~\cite{KMS17} further improved to $O(d^{4/3})$. For more references we refer interested readers to~\cite{BPSSV16}*{Section~1}.

The first goal of this paper is to give a short proof of the result of Bezdek--L\'{a}ngi~\cite{BL00} using a lifting argument and the fact that an almost-equidistant set on a $(d-1)$-dimensional sphere has at most $2d$ points. The second aim is to confirm the conjecture in two cases: for almost-equidistant sets of diameter $1$ (see~\hyperref[section:diameter]{Section~\ref*{section:diameter}}) and for almost-equidistant sets lying in a $d$-dimensional ball of radius $1/\sqrt{2}+cd^{-2/3}$, where $c<1/2$ (see~\hyperref[section:sphere]{Section~\ref*{section:sphere}}). The third goal is to give a new proof of the upper bound $O(d^{4/3})$ for the size of an almost-equidistant set in $\mathbb R^d$ (see \hyperref[section:newproof]{Section~\ref*{section:newproof}}). Also, we discuss several open problems related to almost-equidistant sets (see \hyperref[section:discussion]{Section~\ref*{section:discussion}}).

\section{Preliminaries}
\label{section:preliminaries}
Let $\{\mathbf v_1,\dots, \mathbf v_n\}\subset \mathbb R^d$ be an almost-equidistant set. Consider the matrix
\begin{equation}
\label{equation:matrix_u}
	\mathbf U:=\|\mathbf v_i-\mathbf v_j\|^2+\mathbf I_n-\mathbf J_n,
\end{equation}
where $\mathbf J_n$ is the $n$-by-$n$ matrix of ones and $\mathbf I_n$ is the identity matrix of size $n$.
We need two simple facts proved in~\cite{Pol17}*{Corollary 4 and Lemma 5}. We join them in the following lemma.
\begin{lemma}\label{lemma:eigenvalues}
The matrix $\mathbf U$ satisfies the following two properties.

1. The equalities $\tr (\mathbf U)=\tr(\mathbf U^3)=0$ hold.

2. The matrix $\mathbf U$ has at most one eigenvalue larger than $1$ and at least $n-d-2$ eigenvalues equal to $1$.
\end{lemma}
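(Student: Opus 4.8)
The plan is to read off both properties directly from the entries of $\mathbf U$, supplemented by a Gram-matrix decomposition for the spectral claims. First I would record the entries: the diagonal of $\mathbf U$ vanishes, since $\|\mathbf v_i-\mathbf v_i\|^2+1-1=0$, while for $i\neq j$ the $(i,j)$-entry is $\|\mathbf v_i-\mathbf v_j\|^2-1$, which equals $0$ precisely when $\mathbf v_i$ and $\mathbf v_j$ are at unit distance. Thus the combinatorial content of the almost-equidistant hypothesis becomes: in every triple of distinct indices, at least one of the corresponding off-diagonal entries is zero.

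For Part 1, the equality $\tr(\mathbf U)=\sum_i U_{ii}=0$ is immediate from the vanishing diagonal. For the cubic trace I would expand
\[
\tr(\mathbf U^3)=\sum_{i,j,k}U_{ij}U_{jk}U_{ki}
\]
and show every summand vanishes. If two of the indices coincide, one factor is a diagonal entry and hence zero; if $i,j,k$ are pairwise distinct, the almost-equidistant property forces some pair among them to be at unit distance, so one of $U_{ij}$, $U_{jk}$, $U_{ki}$ is zero. Hence $\tr(\mathbf U^3)=0$.

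For Part 2, I would use $\|\mathbf v_i-\mathbf v_j\|^2=\|\mathbf v_i\|^2-2\langle\mathbf v_i,\mathbf v_j\rangle+\|\mathbf v_j\|^2$ to write
\[
\mathbf U-\mathbf I_n=\mathbf a\mathbf 1^\top+\mathbf 1\mathbf a^\top-\mathbf 1\mathbf 1^\top-2\mathbf G,
\]
where $\mathbf a=(\|\mathbf v_1\|^2,\dots,\|\mathbf v_n\|^2)^\top$, $\mathbf 1$ is the all-ones vector, and $\mathbf G=(\langle\mathbf v_i,\mathbf v_j\rangle)_{i,j}$ is the Gram matrix. Since the $\mathbf v_i$ lie in $\mathbb R^d$, we have $\mathbf G\succeq 0$ and $\rank(\mathbf G)\le d$, while $\mathbf R:=\mathbf a\mathbf 1^\top+\mathbf 1\mathbf a^\top-\mathbf 1\mathbf 1^\top$ has rank at most $2$. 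Therefore $\rank(\mathbf U-\mathbf I_n)\le d+2$, which yields at least $n-d-2$ zero eigenvalues of $\mathbf U-\mathbf I_n$, i.e. at least $n-d-2$ eigenvalues of $\mathbf U$ equal to $1$. To bound the eigenvalues above $1$ I would count positive eigenvalues of $\mathbf U-\mathbf I_n=\mathbf R-2\mathbf G$: the term $-2\mathbf G$ is negative semidefinite and contributes none, and for $\mathbf R$ I would compute the quadratic form $x^\top\mathbf R x=(\mathbf 1^\top x)\bigl(2\,\mathbf a^\top x-\mathbf 1^\top x\bigr)$, which in the coordinates $s=\mathbf 1^\top x$, $t=\mathbf a^\top x$ is $2st-s^2$, a form with exactly one positive eigenvalue. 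Since the number of positive eigenvalues of a sum is at most the sum of those of the summands (intersect the subspaces on which each form is nonpositive), $\mathbf U-\mathbf I_n$ has at most one positive eigenvalue, giving the claim.

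The main obstacle is precisely this last step. A crude bound $\rank(\mathbf R)\le 2$ would only give ``at most two'' eigenvalues of $\mathbf U$ exceeding $1$; obtaining the sharp value of one requires the signature computation above, showing that the rank-two correction $\mathbf R$ supplies a single positive direction (balanced by a negative one) rather than two.
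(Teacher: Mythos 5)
Your proof is correct and complete. The paper itself does not prove this lemma --- it only cites \cite{Pol17}*{Corollary 4 and Lemma 5} --- and your argument (diagonal entries vanish and every triple $U_{ij}U_{jk}U_{ki}$ contains a zero factor, giving $\tr(\mathbf U)=\tr(\mathbf U^3)=0$; then the decomposition $\mathbf U-\mathbf I_n=\mathbf a\mathbf 1^\top+\mathbf 1\mathbf a^\top-\mathbf 1\mathbf 1^\top-2\mathbf G$ with the rank bound for the multiplicity of the eigenvalue $1$ and the inertia/signature count for the single eigenvalue above $1$) is precisely the standard argument underlying that citation, including the key observation that the rank-two correction term contributes only one positive direction.
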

We use \hyperref[lemma:eigenvalues]{Lemma~\ref*{lemma:eigenvalues}} combined with the following lemma several times.
\begin{lemma}
\label{lemma:mainlemma}
Let $\lambda_0, 1,\dots, 1,\lambda_1,\dots,\lambda_k$ be the eigenvalues of an Hermitian matrix $\mathbf A$ of size $n$, indexed in nondecreasing order. Assume $\tr(\mathbf A)=\tr(\mathbf A^3)=0$. 

1. If $\lambda_0=1$, then $n\leq 2k$. 

2. If $\lambda_0+\lambda_k\leq 0$, then $n\leq 2k$. 

3. If $n\geq 2k$ and $\lambda_0>1$, then 
\[
\lambda_0^3>\frac{(n-k)^3}{k^2}-(n-k-1).
\]
\end{lemma}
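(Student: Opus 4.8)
The plan is to reduce all three parts to two elementary inequalities about real numbers that are at least $-1$, obtained by feeding the two trace identities into a single change of variables. Write $m:=n-k-1$ for the multiplicity of the eigenvalue $1$, so that the hypotheses $\tr(\mathbf A)=\tr(\mathbf A^3)=0$ read
\[
\lambda_0+m+\sum_{i=1}^k\lambda_i=0,\qquad \lambda_0^3+m+\sum_{i=1}^k\lambda_i^3=0 .
\]
Here $\lambda_0$ is the extreme eigenvalue with $\lambda_0\ge 1$, while $\lambda_1,\dots,\lambda_k\le 1$ and $\lambda_k=\min_i\lambda_i$. It is convenient to set $\mu_i:=-\lambda_i\ge -1$, so the two relations become
\[
\sum_{i=1}^k\mu_i=\lambda_0+m,\qquad \sum_{i=1}^k\mu_i^3=\lambda_0^3+m .
\]

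For parts 1 and 2 I would use the tangent-line bound $x^3\ge 3x-2$, valid for all $x\ge -1$ because $x^3-3x+2=(x-1)^2(x+2)\ge 0$. In part 1 the hypothesis $\lambda_0=1$ forces $\sum\mu_i^3=\sum\mu_i=m+1$; summing the tangent bound over all $k$ indices gives $m+1=\sum\mu_i^3\ge 3\sum\mu_i-2k=3(m+1)-2k$, i.e. $m+1\le k$, which is exactly $n\le 2k$. In part 2 the hypothesis $\lambda_0+\lambda_k\le 0$ says that $\mu_k=-\lambda_k$ satisfies $\mu_k\ge\lambda_0\ge 1$, so I would \emph{isolate} this one term and apply the tangent bound only to the remaining $k-1$ indices. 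Substituting $\sum_{i<k}\mu_i^3=\lambda_0^3+m-\mu_k^3$ and $\sum_{i<k}\mu_i=\lambda_0+m-\mu_k$ and rearranging turns the inequality into $G(\lambda_0)-G(\mu_k)\ge 2(m-k+1)$, where $G(x):=x^3-3x$ is increasing on $[1,\infty)$. Since $\mu_k\ge\lambda_0\ge 1$, the left-hand side is nonpositive, whence $m\le k-1$, i.e. $n\le 2k$.

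Part 3 is the quantitative statement and genuinely needs the sharper cubic power-mean inequality $\sum_{i=1}^k\mu_i^3\ge \bigl(\sum_{i=1}^k\mu_i\bigr)^3/k^2$. I would prove this by writing $\mu_i=\bar\mu+d_i$ with $\bar\mu=\tfrac1k\sum\mu_i$ and $\sum d_i=0$, yielding the identity $\sum\mu_i^3-k\bar\mu^3=\sum_i d_i^2\,(2\bar\mu+\mu_i)$, all of whose summands are nonnegative once $2\bar\mu+\mu_i\ge 0$. This positivity is exactly where the hypotheses enter: $n\ge 2k$ gives $m\ge k-1$, so with $\lambda_0>1$ one gets $\bar\mu=(\lambda_0+m)/k>1$, and then $2\bar\mu+\mu_i>2-1>0$ since $\mu_i\ge -1$. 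Inserting $\sum\mu_i=\lambda_0+m$ and using $\lambda_0>1$ makes the bound strict, $\sum\mu_i^3>(1+m)^3/k^2$; combining with $\sum\mu_i^3=\lambda_0^3+m$ rearranges to $\lambda_0^3>(1+m)^3/k^2-m=(n-k)^3/k^2-(n-k-1)$, as claimed.

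The main obstacle is part 2: one must resist applying a symmetric power-mean bound to all $k$ coordinates at once, since that is too weak when $\lambda_0$ is large and, indeed, fails to use the hypothesis at all. The correct argument singles out the extreme eigenvalue $\lambda_k$ and exploits the monotonicity of $x^3-3x$ on $[1,\infty)$. The remaining technical points are the verification of the sign condition $2\bar\mu+\mu_i\ge 0$ in part 3 and careful bookkeeping of where the inequalities are strict.
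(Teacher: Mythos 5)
Your proof is correct, and while it runs on the same fuel as the paper's --- the tangent-line inequality $x^3\ge 3x-2$ at $x=1$ and a cubic power-mean bound --- it is organized genuinely differently in places. The paper packages both inequalities into a single lemma of Bezdek and L\'angi (for reals $x_i\ge -2$ with $\sum x_i=m+l$, $l\ge 0$, one has $\sum x_i^3\ge (m+l)^3/m^2\ge m+3l$), proved via a convex minorant of $x^3$ and Jensen's inequality, and then invokes that lemma in all three parts. Your part 1 is essentially the paper's argument with the lemma unpacked, and your part 3 proves the same power-mean step through the variance identity $\sum_i\mu_i^3-k\bar\mu^3=\sum_i d_i^2(2\bar\mu+\mu_i)$ rather than Jensen, the hypotheses $n\ge 2k$ and $\lambda_0>1$ entering through the sign condition $2\bar\mu+\mu_i>0$ just as they enter the paper through the requirements $x_i\ge -2$ and mean $\ge 1$. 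The real divergence is part 2. The paper supposes $n\ge 2k+1$, merges the two extreme eigenvalues into the single number $-\lambda_0-\lambda_k\ge 0$, applies the Bezdek--L\'angi lemma to those $k$ numbers to obtain $-3\lambda_0\lambda_k(\lambda_0+\lambda_k)\ge 2l$, and then needs a sign analysis plus the \emph{equality case} of the lemma to reach a contradiction; that equality-case step is the most fragile point of the paper's write-up (it even contains a typo there, $\lambda_1+\lambda_k$ where $\lambda_0+\lambda_k$ is meant). Your decomposition --- isolating $\mu_k=-\lambda_k\ge\lambda_0\ge 1$, applying the tangent bound only to the remaining $k-1$ eigenvalues, and using monotonicity of $G(x)=x^3-3x$ on $[1,\infty)$ --- yields $m\le k-1$, i.e.\ $n\le 2k$, directly, with no contradiction argument and no equality-case bookkeeping. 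What the paper's route buys is economy (one quotable lemma used three times); what yours buys is a cleaner part 2 and more transparent tracking of where inequalities are strict.
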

\begin{proof}
\textit{1.} Suppose to the contrary that $n>2k$. Introducing the notation $l:=n-2k$, we can rewrite the equations $\tr(\mathbf A)=\tr(\mathbf A^3)=0$ as
\begin{equation*}
\label{equation:sumofeigenvalues}
	\sum_{i=1}^k(-\lambda_i)=\sum_{i=1}^k(-\lambda_i)^3=k+l.
\end{equation*}
To finish the proof, we need Lemma~1 in~\cite{BL00}. For the sake of completeness we provide its proof here.
\begin{lemma}
	\label{lemma:bezdeklangi}
	Let $x_1,\dots, x_m$ be real numbers such that $x_i\geq -2$ for $i=1,\dots,m$. If $\sum_{i=1}^mx_i=(m+l)$, where $l\geq 0$, then
	\[
	\sum_{i=1}^m x_i^3\geq \frac{(m+l)^3}{m^2}.
	\]
Here the equality is possible if and only if the $x_i$ are equal to $1+l/m$.
\end{lemma}
\begin{remark}
Also, note that ${(m+l)^3}/{m^2}\geq m+3l.$
\end{remark}
\begin{proof}
Consider functions $f, g:[-2,+\infty)\to \mathbb R$ such that
\[
	f(x)=x^3 \text{ for any }x\geq-2,\ 
    g(x)=\begin{cases}
    3x-2, &\text{ for any }-2\leq x\leq 1,\\
    x^3,  &\text{ for any }1\leq x.
    \end{cases}
\]
For $-2\leq x\leq 1$ we have $g(x)\leq f(x)$ because in this range $g(x)$ has the value of a tangent line to $f(x)$ at $x=1$ and the second point of the intersection of that tangent line and $f(x)$ is at $x=-2$. Further, $g(x)$ is a convex function in the range $-2\leq x$. By Jensen's Inequality, we obtain
\begin{gather*}
	\sum_{i=1}^m x_i^3=\sum_{i=1}^m f(x_i) \geq\sum_{i=1}^m g(x_i) \geq m g\left(\frac{\sum_{i=1}^mx_i}{m}\right)\\=m\left(\frac{\sum_{i=1}^m x_i}{m}\right)^3=\frac{(m+l)^3}{m^2}\geq m+3l.
\end{gather*}
The equality case we leave as an exercise.
\end{proof}

Clearly, \hyperref[lemma:bezdeklangi]{Lemma~\ref*{lemma:bezdeklangi}} implies that
\[
	\sum_{i=1}^k(-\lambda_i)^3\geq k+3l,
\]
and this is a contradiction with the inequality $l>0$.

\textit{2.} Suppose to the contrary that $n \geq 2k+1$. Introducing the notation $l:=n-2k-1$, we can rewrite the equalities $\tr(\mathbf A)=\tr(\mathbf A^3)=0$ as
\begin{equation}
\label{equation:sumofeigenvaluesnew}
	\sum_{i=1}^{k-1} (-\lambda_i)+ (-\lambda_0-\lambda_k)=\sum_{i=1}^{k-1}(-\lambda_i)^3-\lambda_0^3-\lambda_k^3=k+l.
\end{equation}
By \hyperref[lemma:bezdeklangi]{Lemma~\ref*{lemma:bezdeklangi}}, we obtain
\begin{equation}
\label{equation:sumofmodifiedeigenvalues}
	\sum_{i=1}^{k-1}(-\lambda_i)^3+(-\lambda_0-\lambda_k)^3\geq k+3l.
\end{equation}
The second equality in~\eqref{equation:sumofeigenvaluesnew} implies
\[
	k+l+(-\lambda_0-\lambda_k)^3+\lambda_0^3+\lambda_k^3\geq k+3l,
\]
and so
\begin{equation}
\label{equation:case2}
-3\lambda_0\lambda_k(\lambda_0+\lambda_k)\geq 2l.
\end{equation}
Since $\lambda_0>0$, $\lambda_k<0$, $\lambda_0+\lambda_k\leq 0$ and $l\geq 0$, we get $\lambda_0+\lambda_k=0$ and $l=0$. By~\hyperref[lemma:bezdeklangi]{Lemma~\ref*{lemma:bezdeklangi}}, if $\lambda_1+\lambda_k=0$ then we have a strict inequality in~\eqref{equation:sumofmodifiedeigenvalues}, and thus we get a strict inequality in~\eqref{equation:case2}. This implies a contradiction with the equality $l=0$.

\textit{3.} Clearly, we can rewrite the equalities $\tr (\mathbf A)=\tr(\mathbf A^3)=0$ as
\[
\sum_{i=1}^{k}(-\lambda_i)=\lambda_0+n-k-1> n-k
\]
and
\[
 \lambda_0^3=\sum_{i=1}^{k}(-\lambda_i)^3-(n-k-1).
\]
Since $n\geq 2k$, \hyperref[lemma:bezdeklangi]{Lemma~\ref*{lemma:bezdeklangi}} implies that
\[
\lambda_0^3 > \frac{(n-k)^3}{k^2}-(n-k-1).
\]
\end{proof}
\begin{corollary}
\label{corollary:useful}
If the matrix $\mathbf U$ does not have an eigenvalue larger than $1$, then $n\leq 2d+4$.
\end{corollary}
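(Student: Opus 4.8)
The plan is to feed the spectral information of \hyperref[lemma:eigenvalues]{Lemma~\ref*{lemma:eigenvalues}} directly into the first part of \hyperref[lemma:mainlemma]{Lemma~\ref*{lemma:mainlemma}}. First I would dispose of the trivial range: if $n\le d+2$, then $n\le 2d+4$ and there is nothing to prove, so I may assume $n\ge d+3$. The point of this reduction is that it guarantees at least one eigenvalue equal to $1$, which is exactly what will pin down the top of the spectrum.

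In this range, \hyperref[lemma:eigenvalues]{Lemma~\ref*{lemma:eigenvalues}} supplies two ingredients for the real symmetric (hence Hermitian) matrix $\mathbf U$: the trace identities $\tr(\mathbf U)=\tr(\mathbf U^3)=0$, and the fact that at least $n-d-2\ge 1$ of its eigenvalues equal $1$. By the hypothesis of the corollary, no eigenvalue of $\mathbf U$ exceeds $1$; together with the presence of at least one eigenvalue equal to $1$, this forces the largest eigenvalue to be exactly $1$. Labelling the spectrum as $\lambda_0,1,\dots,1,\lambda_1,\dots,\lambda_k$ as in \hyperref[lemma:mainlemma]{Lemma~\ref*{lemma:mainlemma}}, with $\lambda_1,\dots,\lambda_k$ the eigenvalues strictly below $1$, we land precisely in the case $\lambda_0=1$, so part~1 of that lemma applies and gives $n\le 2k$.

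It then remains to bound $k$. Since at least $n-d-2$ eigenvalues equal $1$ and none exceeds $1$, at most $d+2$ eigenvalues can be strictly smaller than $1$, that is, $k\le d+2$. Combining this with $n\le 2k$ yields $n\le 2(d+2)=2d+4$, as claimed.

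I do not anticipate a genuine obstacle, since the argument is a clean specialization of the machinery already built. The two places demanding care are exactly the reduction and the counting: the separate treatment of $n\le d+2$ is what ensures at least one eigenvalue equals $1$ (so that the maximal eigenvalue is pinned to $1$ and case~1, rather than case~3, is triggered), and $k$ must be read off as the number of eigenvalues strictly below $1$, so that the $n-1-k$ remaining eigenvalues are precisely the $1$'s sitting in the middle block of the lemma's labelling.
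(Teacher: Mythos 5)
Your proof is correct and is exactly the argument the paper intends: the paper's one-line proof ("follows from Lemmas~\ref{lemma:eigenvalues} and~\ref{lemma:mainlemma}, case~1") is precisely your specialization, and you have supplied the details it omits. In particular, your handling of the range $n\le d+2$, the identification $\lambda_0=1$, and the count $k\le d+2$ of eigenvalues below $1$ are the right way to fill in that "easily follows."
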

\begin{proof}
The proof easily follows from Lemmas~\ref{lemma:eigenvalues} and~\ref{lemma:mainlemma} (\textit{case 1}).
\end{proof}
We need the following technical lemma only in the proof of the fact that an almost-equidistant set has $O(d^{4/3})$ points; see \hyperref[theorem:kms17_4/3]{Theorem~\ref*{theorem:kms17_4/3}}.
\begin{lemma}
\label{lemma:end}
If $\{\mathbf w_0, \mathbf w_1,\dots,\mathbf w_k\}$ is an almost-equidistant set in $\mathbb R^d$ such that
\begin{equation}
\label{equation:radius}
\left|\|\mathbf w_i\|^2-1/2\right|\leq x
\end{equation}
for $0\leq i \leq k$ and a positive $x$, then
\[
\left| \sum_{1\leq i\leq k}\left(\|\mathbf w_0-\mathbf w_i\|^2-1\right)\right| \leq c \left(d^{1/2}+dx^{1/2}+dx\right)
\]
for some positive constant $c$ independent of $d$ and $x$.
\end{lemma}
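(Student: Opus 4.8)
The plan is to reduce the sum to its nonzero terms and then exploit the equilateral structure hidden in an almost-equidistant set. Split $\{1,\dots,k\}$ into $A=\{i:\|\mathbf w_0-\mathbf w_i\|=1\}$ and $B=\{i:\|\mathbf w_0-\mathbf w_i\|\neq 1\}$. Every term with $i\in A$ contributes $\|\mathbf w_0-\mathbf w_i\|^2-1=0$, so the entire sum equals $\sum_{i\in B}(\|\mathbf w_0-\mathbf w_i\|^2-1)$. The crucial observation is that $\{\mathbf w_i:i\in B\}$ is an equidistant set: for distinct $i,j\in B$ neither $\mathbf w_i$ nor $\mathbf w_j$ is at unit distance from $\mathbf w_0$, so applying the almost-equidistant condition to the triple $\{\mathbf w_0,\mathbf w_i,\mathbf w_j\}$ forces $\|\mathbf w_i-\mathbf w_j\|=1$. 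Since an equilateral set in $\mathbb R^d$ has at most $d+1$ points, we get $|B|\leq d+1$.

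Next I would expand each remaining term. Writing $\|\mathbf w_0-\mathbf w_i\|^2-1=(\|\mathbf w_0\|^2+\|\mathbf w_i\|^2-1)-2\langle\mathbf w_0,\mathbf w_i\rangle$ and using~\eqref{equation:radius}, the first bracket has absolute value at most $2x$. Summed over $B$, this part is bounded by $2x|B|\leq 2x(d+1)=O(dx)$, which accounts for the $dx$ term. The remaining contribution is $-2\langle\mathbf w_0,\sum_{i\in B}\mathbf w_i\rangle$, so by the Cauchy--Schwarz inequality it suffices to bound $\|\mathbf w_0\|\cdot\bigl\|\sum_{i\in B}\mathbf w_i\bigr\|$; here $\|\mathbf w_0\|\leq\sqrt{1/2+x}=O(1+x^{1/2})$ by~\eqref{equation:radius}.

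Finally I would estimate $\bigl\|\sum_{i\in B}\mathbf w_i\bigr\|^2=\sum_{i\in B}\|\mathbf w_i\|^2+\sum_{i\neq j}\langle\mathbf w_i,\mathbf w_j\rangle$. The diagonal part is at most $(d+1)(1/2+x)=O(d+dx)$. For the off-diagonal part, the equidistant relation gives $\langle\mathbf w_i,\mathbf w_j\rangle=\tfrac12(\|\mathbf w_i\|^2+\|\mathbf w_j\|^2-1)$, whose absolute value is at most $x$ by~\eqref{equation:radius}; since there are at most $(d+1)d$ ordered pairs, this contributes $O(d^2x)$. Hence $\bigl\|\sum_{i\in B}\mathbf w_i\bigr\|=O(d^{1/2}+dx^{1/2})$, and multiplying by $\|\mathbf w_0\|=O(1+x^{1/2})$ yields $O(d^{1/2}+dx^{1/2}+dx)$ after absorbing $d^{1/2}x^{1/2}\leq dx^{1/2}$. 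Adding the earlier $O(dx)$ gives the claimed bound. The main obstacle is the off-diagonal estimate: the crude bound on $\sum_{i\neq j}\langle\mathbf w_i,\mathbf w_j\rangle$ is quadratic in $d$, and it is precisely the near-orthogonality $|\langle\mathbf w_i,\mathbf w_j\rangle|\leq x$ — forced by the equidistant structure together with~\eqref{equation:radius} — that keeps this term at $O(d^2x)$, so that its square root supplies the dominant nontrivial term $dx^{1/2}$.
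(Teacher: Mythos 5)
Your proof is correct and follows essentially the same route as the paper's: the same reduction to the equilateral subset $B$ (with $|B|\le d+1$ since these points form a unit simplex), the same Cauchy--Schwarz treatment of the cross term with $\mathbf w_0$, and the same key estimate on the norm of the sum (equivalently, the barycenter) of the simplex points. The only difference is cosmetic: where the paper invokes the cited barycenter identity (Theorem~\ref{theorem:centermass}) to compute $\sum_i\|\mathbf w_0-\mathbf w_i\|^2$ and $\|\mathbf o'\|^2$, you carry out the equivalent polarization expansion $\langle\mathbf w_i,\mathbf w_j\rangle=\tfrac12\left(\|\mathbf w_i\|^2+\|\mathbf w_j\|^2-1\right)$ by hand, which makes your write-up self-contained but is the same computation.
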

\begin{proof} 
We may assume without loss of generality that $\|\mathbf w_0-\mathbf w_i\|\ne 1$ for $1\leq i\leq k$. Since the set is almost-equidistant, the points $\mathbf w_1,\dots, \mathbf w_k$ form a regular unit $(k-1)$-simplex, and so $k\leq d+1\leq 2d$.

We use the following theorem (see~\cite{DM94}*{Theorem~1}) several times.
\begin{theorem}\label{theorem:centermass}
	Let $X = \{\mathbf x_1,\dots, \mathbf x_n\}$ and $Y = \{\mathbf y_1, \dots, \mathbf y_n\}$ be two point-sets in $\mathbb R^d$. Then 
	\[
	\sum_{1\leq i,j\leq n} \|\mathbf x_i-\mathbf y_j\|^2 = \sum_{1\leq i<j\leq n} \|\mathbf x_i - \mathbf x_j\|^2 + \sum_{1\leq i< j\leq n}\|\mathbf y_i- \mathbf y_j\|^2 + n^2\|\mathbf x -\mathbf y\|^2,
	\]
	where $\mathbf x$ and $\mathbf y$ are the barycenters of $X$ and $Y$, respectively, that is, 
	\[
	\mathbf x =(\mathbf x_1 + \cdots + \mathbf x_n)/n,\ \mathbf y = (\mathbf y_1 + \cdots +\mathbf y_n)/n. 
	\]
\end{theorem}

\hyperref[theorem:centermass]{Theorem~\ref*{theorem:centermass}} applied to $\{\mathbf w_0,\dots, \mathbf w_0\}$ and $\{\mathbf w_1,\dots, \mathbf w_k \}$ implies
\begin{equation}
\label{eq:ineq1}
\sum_{1\leq i\leq k} \|\mathbf w_0-\mathbf w_i\|^2=\frac{k-1}{2} + k\|\mathbf w_0-\mathbf o'\|^2=\frac{k-1}{2}+k\|\mathbf w_0\|^2+k\|\mathbf o'\|^2+O(1)\cdot k\|\mathbf w_0\|\|\mathbf o'\|,
\end{equation}
where $\mathbf o'$ is the center of the simplex $\mathbf w_1\dots \mathbf w_k$. \hyperref[theorem:centermass]{Theorem~\ref*{theorem:centermass}} applied to $\{\mathbf o,\dots, \mathbf o\}$ and $\{\mathbf w_1,\dots,\mathbf w_k\}$, where $\mathbf o$ is the origin, and combined with \eqref{equation:radius} yields
\begin{equation}
\label{eq:ineq3}	
k\|\mathbf o'\|^2=\sum_{1\leq i\leq k}\|\mathbf w_i\|^2-\frac{k-1}{2}=\frac{k}{2}-\frac{k-1}{2}+O(1)\cdot kx=\frac{1}{2}+O(1)\cdot kx.
\end{equation}
By \eqref{equation:radius} and \eqref{eq:ineq3}, we obtain
\begin{gather*}
\eqref{eq:ineq1}= \frac{k-1}{2}+\frac{k}{2}+\frac{1}{2}+ O(1)\cdot kx+O(1)\cdot k(1+x^{1/2})(k^{-1/2}+x^{1/2})\\
=k+O(1)\cdot(k^{1/2}+kx^{1/2}+kx).
\end{gather*}
The fact $k\leq 2d$ finishes the proof of the lemma.
\end{proof}
\section{A simple proof of the result of Bezdek and L\'{a}ngi}
\label{section:simpleproof}
\begin{theorem}[Bezdek, L\'{a}ngi, 1999]
\label{theorem:bezdek-langi}
If an almost equidistant-set lies on a $(d-1)$-dimensional sphere $S$ of radius $r$, where $r \leq 1/\sqrt{2}$, then it has at most $2d+2$ points.
\end{theorem}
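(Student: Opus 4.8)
The plan is to reduce the case of an arbitrary radius $r\leq 1/\sqrt2$ to Rosenfeld's theorem for the critical radius $1/\sqrt2$ by a single lifting into one higher dimension. First I would translate so that the sphere $S$ is centered at the origin; then the almost-equidistant set $\{\mathbf v_1,\dots,\mathbf v_n\}\subset S$ satisfies $\|\mathbf v_i\|=r$ for every $i$. Because $r\leq 1/\sqrt2$, the number $h:=\sqrt{1/2-r^2}$ is a well-defined nonnegative real, and this is the only place where the hypothesis on the radius is used.

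Next I would lift each point to $\mathbb R^{d+1}$ by appending the common last coordinate $h$, setting $\tilde{\mathbf v}_i:=(\mathbf v_i,h)$. The key observation is that this map is an isometry on the set: since all lifted points share the same last coordinate,
\[
\|\tilde{\mathbf v}_i-\tilde{\mathbf v}_j\|^2=\|\mathbf v_i-\mathbf v_j\|^2+(h-h)^2=\|\mathbf v_i-\mathbf v_j\|^2,
\]
so all pairwise distances are preserved and $\{\tilde{\mathbf v}_1,\dots,\tilde{\mathbf v}_n\}$ is again almost-equidistant. Moreover,
\[
\|\tilde{\mathbf v}_i\|^2=\|\mathbf v_i\|^2+h^2=r^2+\left(\tfrac12-r^2\right)=\tfrac12,
\]
so each lifted point lies on the $d$-dimensional sphere of radius $1/\sqrt2$ centered at the origin of $\mathbb R^{d+1}$.

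Finally I would invoke Rosenfeld's bound in dimension $d+1$: an almost-equidistant set on a $d$-dimensional sphere of radius $1/\sqrt2$ has at most $2(d+1)=2d+2$ points. Since the lifting is injective, the cardinality is unchanged, and the original set has at most $2d+2$ points, as claimed.

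As for difficulty, there is essentially no obstacle here — once the lifting is set up every step is elementary, and the whole content of the argument is the bookkeeping that realizing the extremal radius $1/\sqrt2$ requires raising the dimension by exactly one. The gain of two points over Rosenfeld's $2d$ is precisely the cost of that extra dimension, $2(d+1)-2d=2$, which matches the extremal configuration of two unit $d$-simplices inscribed in a common sphere mentioned in the introduction.
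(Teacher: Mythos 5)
Your proposal is correct and is essentially the paper's own proof: both arguments lift the set into $\mathbb R^{d+1}$ so that it lies on a $d$-dimensional sphere of radius $1/\sqrt{2}$ (you move the points by appending the coordinate $h=\sqrt{1/2-r^2}$; the paper equivalently keeps the points fixed and moves the center off the affine hull of $S$ by that same distance), and then both invoke Rosenfeld's theorem to get the bound $2(d+1)=2d+2$. Your write-up even makes explicit the correct quantity $1/2-r^2$, where the paper's proof has the apparent typo $|\mathbf o'-\mathbf o|^2=1/2-r$.
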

\begin{proof}
Assume that $S$ is embedded in $\mathbb R^{d+1}$ and its center $\mathbf o$ and a point $\mathbf o'\in\mathbb R^{d+1}$ are such that the line $\mathbf o' \mathbf o$ is orthogonal to the affine hull of $S$ and $|\mathbf o'-\mathbf o|^2 = 1/2-r$. Now, we get that the almost-equidistant set lies on the $d$-dimensional sphere of radius $1/\sqrt{2}$ with center $\mathbf o'$. By Rosenfeld's theorem~\cite{R90}, the size of the almost-equidistant sets is at most $2(d+1)$.
\end{proof}
\section{Almost-equidistant diameter sets}
\label{section:diameter}
A subset of $\mathbb R^d$ is called an \textit{almost-equidistant diameter set} if it is almost-equidistant and has diameter $1$; see \cite{Pol17}*{Proposition~13 and Problem~14}. The next theorem is about the maximal size of such sets.
\begin{theorem} 
\label{theorem:diameter} An almost-equidistant diameter set in $\mathbb R^d$ has at most $2d+4$ points.
\end{theorem}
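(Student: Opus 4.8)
The plan is to run the matrix $\mathbf U$ from~\eqref{equation:matrix_u} through Lemmas~\ref{lemma:eigenvalues} and~\ref{lemma:mainlemma}, using that the diameter hypothesis fixes the sign of the off-diagonal entries. Since $\|\mathbf v_i-\mathbf v_j\|\le 1$ for $i\neq j$, each off-diagonal entry $\mathbf U_{ij}=\|\mathbf v_i-\mathbf v_j\|^2-1$ lies in $[-1,0]$, the diagonal is zero, and $-\mathbf U$ is a nonnegative symmetric matrix with zero diagonal supported on the complement of the unit-distance graph (which is triangle-free by the almost-equidistant condition). Let $\lambda_0$ be the largest eigenvalue of $\mathbf U$ and let $k$ be the number of eigenvalues strictly below $1$. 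By Lemma~\ref{lemma:eigenvalues} at most one eigenvalue exceeds $1$ and at least $n-d-2$ equal $1$, so $k\le d+1$.

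I would first dispatch the easy cases. If $\lambda_0\le 1$, then $\mathbf U$ has no eigenvalue larger than $1$ and Corollary~\ref{corollary:useful} gives $n\le 2d+4$. If $\lambda_0>1$ but $n\le 2k$, then $n\le 2(d+1)\le 2d+4$ as well. The only remaining situation is $\lambda_0>1$ together with $n>2k$: here $\lambda_0$ is the unique eigenvalue above $1$, the eigenvalue $1$ has multiplicity $n-k-1>k-1$, and part~3 of Lemma~\ref{lemma:mainlemma} applies, giving
\[
\lambda_0^3>\frac{(n-k)^3}{k^2}-(n-k-1).
\]
Assuming $n\ge 2d+5$ (the range I want to exclude) and using $k\le d+1$, the right-hand side is increasing in $n$ and decreasing in $k$ on this range, so it is minimized at $k=d+1$, $n=2d+5$, where it equals $\tfrac{(d+4)^3}{(d+1)^2}-(d+3)$; this quantity is strictly greater than $7$ for every $d$ and tends to $7$. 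Hence a contradiction would follow as soon as one shows $\lambda_0\le\sqrt[3]{7}<2$.

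Thus the whole theorem reduces to a single estimate: in the regime $n>2k$ the top eigenvalue satisfies $\lambda_0\le\sqrt[3]{7}$. This is the main obstacle, and some hypothesis is genuinely needed, since for a regular unit simplex together with its circumcenter one computes $\lambda_0\sim\sqrt{d}/2\gg 2$ — but such configurations lie in the excluded range $n\le 2k$. To control $\lambda_0$ I would combine two ingredients. First, Perron--Frobenius applied to $-\mathbf U\ge 0$ gives $\lambda_0=\lambda_{\max}(\mathbf U)\le-\lambda_{\min}(\mathbf U)$, so the top eigenvalue is dominated by the most negative one; fed back into the exact identities $\sum_i(-\lambda_i)=\lambda_0+n-k-1$ and $\sum_i(-\lambda_i)^3=\lambda_0^3+n-k-1$ this constrains $\lambda_0$ from above. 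Second, and more importantly, $n>2k$ forces eigenvalue $1$ to have large multiplicity, i.e. $\mathbf I_n-\mathbf U=2G-\mathbf b\,\mathbf 1^{\mathsf T}-\mathbf 1\,\mathbf b^{\mathsf T}$ (with $G$ the Gram matrix of the $\mathbf v_i$ and $b_i=\|\mathbf v_i\|^2-\tfrac12$) has small rank; geometrically this says the configuration is close to lying on a sphere of radius $1/\sqrt2$, which is precisely the Rosenfeld regime. Coupling the entrywise bound $-1\le\mathbf U_{ij}\le 0$ with this near-spherical structure — and, if useful, Jung's theorem, which confines the whole set to a ball of radius $\sqrt{d/(2(d+1))}<1/\sqrt2$ — should pin $\lambda_0$ below $2$ and close the argument. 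I expect the rigorous verification of this last estimate, rather than the bookkeeping with Lemma~\ref{lemma:mainlemma}, to be where the real difficulty lies.
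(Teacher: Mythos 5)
Your bookkeeping is correct as far as it goes --- the entrywise bounds on $\mathbf U$, the dispatch of the cases $\lambda_0\le 1$ (via Corollary~\ref{corollary:useful}) and $n\le 2k$, and the application of part~3 of Lemma~\ref{lemma:mainlemma} with the limit value $7$ are all right. But the proof has a genuine gap: the remaining case ($\lambda_0>1$ and $n>2k$) is never closed. You reduce it to the estimate $\lambda_0\le\sqrt[3]{7}$ and then only sketch ingredients that ``should pin $\lambda_0$ below $2$,'' explicitly deferring what you call the real difficulty. As written, this is a reduction of the theorem to an unproven eigenvalue bound, not a proof.

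The irony is that you already hold the key and do not turn it. You observe that Perron--Frobenius applied to $-\mathbf U\ge 0$ gives $\lambda_0\le-\lambda_{\min}(\mathbf U)$, i.e.\ $\lambda_0+\lambda_k\le 0$ where $\lambda_k$ is the smallest eigenvalue. That is exactly the hypothesis of part~2 of Lemma~\ref{lemma:mainlemma}, which then yields $n\le 2k\le 2d+2$ outright. In other words, the regime ``$\lambda_0>1$ and $n>2k$'' is vacuous: Perron--Frobenius contradicts it via part~2 directly, and no bound on $\lambda_0$ by an absolute constant is needed --- nor any near-spherical structure, rank considerations, or Jung's theorem. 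This is precisely the paper's proof: if $\mathbf U$ has no eigenvalue above $1$, Corollary~\ref{corollary:useful} gives $n\le 2d+4$; otherwise the unique eigenvalue $\lambda>1$ satisfies $\lambda+\lambda_k\le 0$ because $-\mathbf U$ has non-negative entries, and part~2 of Lemma~\ref{lemma:mainlemma} gives $n\le 2d+2$. You reached for part~3 of the lemma where part~2 was the intended tool; replacing your final paragraph by this two-line observation completes your argument.
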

\begin{proof}
Clearly, the matrix $\mathbf U$ (see \eqref{equation:matrix_u}) for an almost-equidistant diameter set in $\mathbb R^d$ has non-positive entries. \hyperref[lemma:eigenvalues]{Lemma~\ref*{lemma:eigenvalues}} and \hyperref[corollary:useful]{Corollary~\ref*{corollary:useful}} imply that we can assume without loss of generality that $\mathbf U$ has exactly one eigenvalue $\lambda$ larger than $1$. 

We need the following week form of the \hyperref[theorem:perron]{Perron--Frobenius Theorem}; see ~\cites{Per1907,Fro1912}.
\begin{theorem}[Perron--Frobenius Theorem]
	\label{theorem:perron}
If an $n$-by-$n$ matrix has non-negative entries, then it has
a non-negative real eigenvalue, which has maximum absolute value among all eigenvalues.
\end{theorem}
By the \hyperref[theorem:perron]{Perron--Frobenius Theorem}, the matrix $\mathbf U$ has a negative eigenvalue $\lambda'$ such that $|\lambda'|\geq\lambda$. Therefore, Lemmas~\ref{lemma:eigenvalues} and~\ref{lemma:mainlemma} (\textit{case 2}) imply that the almost-equidistant diameter set has at most $2d+2$ points.
\end{proof}
\section{Almost-equidistant sets in small balls}
\label{section:sphere}
\begin{theorem}
\label{theorem:smallspheres}
Let $0 \leq c_0 < 1/2$ be a fixed constant. If an almost-equidistant set lies in a $d$-dimensional ball of radius $\sqrt{1/2 + c_0/(d+1)^{2/3}}$, then its cardinality is $O(d)/(1-2c_0)$.
\end{theorem}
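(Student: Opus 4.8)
The plan is to run the eigenvalue machinery of \hyperref[section:preliminaries]{Section~\ref*{section:preliminaries}} on the matrix $\mathbf U$ of the set $\{\mathbf v_1,\dots,\mathbf v_n\}$, putting the origin $\mathbf o$ at the centre of the ball and writing $R$ for its radius, so that $\|\mathbf v_i\|\le R$ and $x:=R^2-\tfrac12=c_0/(d+1)^{2/3}$. By \hyperref[lemma:eigenvalues]{Lemma~\ref*{lemma:eigenvalues}} at least $n-d-2$ eigenvalues of $\mathbf U$ equal $1$; if none exceeds $1$, then \hyperref[corollary:useful]{Corollary~\ref*{corollary:useful}} already gives $n\le 2d+4$. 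So I may assume $\mathbf U$ has exactly one eigenvalue $\lambda_0>1$ and, in the notation of \hyperref[lemma:mainlemma]{Lemma~\ref*{lemma:mainlemma}}, that $k\le d+1$. The theorem then splits into two tasks: (i) the geometric upper bound $\lambda_0\le 1+2nx+o(nx)$ forced by the small radius, and (ii) feeding it into \hyperref[lemma:mainlemma]{Lemma~\ref*{lemma:mainlemma}}(3).

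For task (i) I would first bound the Rayleigh quotient of $\mathbf U$ at $\mathbf 1$. Expanding and applying \hyperref[theorem:centermass]{Theorem~\ref*{theorem:centermass}} to $\{\mathbf v_i\}$ against itself,
\[
\mathbf 1^{\mathsf T}\mathbf U\mathbf 1=\sum_{i,j}\|\mathbf v_i-\mathbf v_j\|^2+n-n^2=2n\sum_i\|\mathbf v_i-\bar{\mathbf v}\|^2+n-n^2\le 2n\cdot nR^2+n-n^2=2xn^2+n,
\]
since the barycenter minimises $\sum_i\|\mathbf v_i-\cdot\|^2$ and $\|\mathbf v_i-\mathbf o\|\le R$; hence $q:=\mathbf 1^{\mathsf T}\mathbf U\mathbf 1/n\le 1+2nx$. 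Next, for $\mathbf y\perp\mathbf 1$ one has $\mathbf J_n\mathbf y=0$ and $\mathbf y^{\mathsf T}\mathbf U\mathbf y=\mathbf y^{\mathsf T}(\|\mathbf v_i-\mathbf v_j\|^2)_{i,j}\,\mathbf y+\|\mathbf y\|^2\le\|\mathbf y\|^2$, because a squared‑distance matrix is negative semidefinite on $\mathbf 1^{\perp}$; thus the second‑largest eigenvalue of $\mathbf U$ is at most $1<\lambda_0$. Decomposing $\mathbf U$ in the splitting $\mathbb R^n=\langle\mathbf 1\rangle\oplus\mathbf 1^{\perp}$ as $\left(\begin{smallmatrix}q&\mathbf b^{\mathsf T}\\ \mathbf b&B\end{smallmatrix}\right)$ with $\lambda_{\max}(B)\le 1$, the Schur‑complement identity for the eigenvalue $\lambda_0>\lambda_{\max}(B)$ gives $\lambda_0-q=\mathbf b^{\mathsf T}(\lambda_0 \mathbf I-B)^{-1}\mathbf b\le\|\mathbf b\|^2/(\lambda_0-1)$, i.e. $(\lambda_0-q)(\lambda_0-1)\le\|\mathbf b\|^2$, where $\|\mathbf b\|^2$ is exactly the variance of the row sums $(\mathbf U\mathbf 1)_i$. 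Solving yields $\lambda_0\le 1+2nx+\|\mathbf b\|$, so (i) follows once this variance is shown to be $o((nx)^2)$.

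The main obstacle is precisely controlling $\|\mathbf b\|$. Since the non‑unit neighbours of $\mathbf v_i$ form a regular unit simplex $T_i$ (otherwise some triple has no unit pair) with circumcentre $\mathbf o_i'$ and circumradius $\rho_i$, the parallel‑axis identity gives the exact formula
\[
(\mathbf U\mathbf 1)_i=s_i\,\|\mathbf v_i-\mathbf o_i'\|^2-\tfrac{s_i+1}{2},\qquad s_i:=|T_i|\le d+1 .
\]
The small‑ball geometry pins $\mathbf o_i'$ within $O(d^{-1/2})$ of $\mathbf o$, and the fact that $\mathbf v_i$ must keep a unit neighbour inside the ball forces $\|\mathbf v_i-\mathbf o\|\ge 1-R$, ruling out the worst configurations; the delicate point—and the step I expect to be hardest, most likely needing a further averaging or a second use of \hyperref[theorem:centermass]{Theorem~\ref*{theorem:centermass}}—is upgrading these size constraints into genuine \emph{concentration} of the $(\mathbf U\mathbf 1)_i$. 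This is also where the exponent $2/3$ in the radius must enter, since it matches $2nx$ to the order $d^{1/3}$ at which \hyperref[lemma:mainlemma]{Lemma~\ref*{lemma:mainlemma}}(3) becomes effective.

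Finally, for task (ii) I assume $n\ge 2k$ (else $n<2(d+1)$ and we are done) and set $n=\gamma(d+1)$. As $(n-k)^3/k^2-(n-k-1)$ decreases in $k$, using $k\le d+1$, \hyperref[lemma:mainlemma]{Lemma~\ref*{lemma:mainlemma}}(3) and (i) I would obtain
\[
(2nx)^3(1+o(1))\ge\lambda_0^3>\frac{(n-k)^3}{k^2}-(n-k-1)\ge (d+1)\,\gamma(\gamma-1)(\gamma-2).
\]
With $2nx=2c_0\gamma(d+1)^{1/3}$ the left side is $8c_0^3\gamma^3(d+1)(1+o(1))$; dividing by $(d+1)\gamma$ and letting $d\to\infty$ gives $8c_0^3\gamma^2(1+o(1))>(\gamma-1)(\gamma-2)$, whence $\gamma\,(1-8c_0^3)<3+o(1)$. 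Since $1-8c_0^3=(1-2c_0)(1+2c_0+4c_0^2)\ge 1-2c_0>0$, this gives $n=\gamma(d+1)=O(d)/(1-2c_0)$, as claimed, and makes transparent why the threshold is exactly $c_0=\tfrac12$: it is dictated by the constant $2$ in the bound $\lambda_0\le 1+2nx$ of task (i).
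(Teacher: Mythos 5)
Your skeleton agrees with the paper's: reduce (via Lemma~\ref{lemma:eigenvalues} and Corollary~\ref{corollary:useful}) to the case of exactly one eigenvalue $\lambda_0>1$ of $\mathbf U$, prove a spectral bound of the form $\lambda_0\le 1+2nx+o(nx)$, and feed it into Lemma~\ref{lemma:mainlemma}(3); your task~(ii) arithmetic is a correct, slightly more explicit version of the paper's final step, including the observation $1-8c_0^3\ge 1-2c_0$. But the proof is not complete, because task~(i) \emph{is} the theorem, and there you only establish $\lambda_0\le\max(q,1)+\|\mathbf b\|$ with $q\le 1+2nx$, leaving the bound on $\|\mathbf b\|$ unproven --- a gap you flag yourself as ``the step I expect to be hardest.'' This is not a deferrable technicality. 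For the cube comparison you need $\|\mathbf b\|=o(nx)=o(d^{1/3})$ when $n=\Theta(d)$, i.e.\ genuine concentration of the row sums $(\mathbf U\mathbf 1)_i$ at scale $d^{1/3}$, and no tool in the paper (nor in your sketch) delivers it: the only available row-sum estimate, Lemma~\ref{lemma:end}, gives $\bigl|(\mathbf U\mathbf 1)_i\bigr|=O\bigl(d^{1/2}+dx^{1/2}+dx\bigr)=O(d^{2/3})$ for $x=c_0(d+1)^{-2/3}$, which is a factor $d^{1/3}$ too weak, and it moreover requires all points to lie in a thin shell $\bigl|\|\mathbf v_i\|^2-1/2\bigr|\le x$, which points merely lying in the ball need not satisfy. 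Your geometric remarks (non-neighbours of $\mathbf v_i$ form a unit simplex, $\|\mathbf v_i\|\ge 1-R$) are correct but nowhere near strong enough to force the top eigenvector of $\mathbf U$ to align with $\mathbf 1$, which is what your Schur-complement route secretly demands.

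The idea you are missing is that one can bound $\lambda_0$ with no eigenvector information at all, by an algebraic rather than orthogonal decomposition. Writing $\|\mathbf v_i\|^2=1/2+c_i$ with $c_i\le r:=c_0/(d+1)^{2/3}$, the identity $\|\mathbf v_i-\mathbf v_j\|^2-1=c_i+c_j-2\langle\mathbf v_i,\mathbf v_j\rangle$ gives
\[
\mathbf U=(c^tj+j^tc)-2G+\mathbf I_n,
\]
where $G=\bigl(\langle\mathbf v_i,\mathbf v_j\rangle\bigr)_{i,j}$ is the positive semidefinite Gram matrix, $c=(c_1,\dots,c_n)$ and $j=(1,\dots,1)$. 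Weyl's inequality (Theorem~\ref{theorem:sumofHermitian}) then yields $\lambda_0\le \lambda_{\max}(c^tj+j^tc)+\lambda_{\max}(-2G)+1\le 2nr+1$, since the rank-two Hermitian matrix $c^tj+j^tc$ has largest eigenvalue $\sum_ic_i+\sqrt{n}\,\|c\|\le 2nr$ once $|c_i|\le r$, and $-2G\preceq 0$. The small radius thus enters as a bound on a rank-two perturbation of $-2G+\mathbf I_n$, and the row sums $(\mathbf U\mathbf 1)_i$ never appear. (Note that this clean bound, like your approach, is really at home in the thin-shell regime where $|c_i|\le r$; that is exactly the regime in which Theorem~\ref{theorem:smallspheres} gets applied in the proof of Theorem~\ref{theorem:kms17_4/3}.) In short: your correct steps (the Rayleigh quotient at $\mathbf 1$, Schoenberg negativity on $\mathbf 1^{\perp}$, the Schur complement) are all fine, but the route through the splitting $\langle\mathbf 1\rangle\oplus\mathbf 1^{\perp}$ requires strictly more information than the theorem needs, and that extra information --- concentration of row sums --- is precisely what you cannot supply.
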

\begin{proof}
Let $\{\mathbf v_1,\dots, \mathbf v_n\}$ be an almost-equidistant set in $\mathbb R^d$ such that $n\geq 2d+2$ and $\|\mathbf v_i\|^2=1/2+c_i$ for $1\leq i\leq n$, where $c_i\leq c_0/(d+1)^{2/3}=r$.

By \hyperref[corollary:useful]{Corollary~\ref*{corollary:useful}} and \hyperref[lemma:eigenvalues]{Lemma~\ref*{lemma:eigenvalues}}, without loss of generality we can assume that the matrix $\mathbf U$ (see~\eqref{equation:matrix_u}) for this set has exactly one eigenvalue $\lambda$ larger than $1$.  Hence Lemmas~\ref{lemma:eigenvalues} and~\ref{lemma:mainlemma} (\textit{case 3}) for $\mathbf U$ yield
\[
	\lambda^3> \frac{(n-d-1)^3}{(d+1)^2}-(n-d-2).
\]
To finish the proof, we need the following special case of \hyperref[theorem:sumofHermitian]{Weyl's Inequality}~\cite{W1912}*{Theorem~1}.
\begin{theorem}[Weyl's~Inequality]
\label{theorem:sumofHermitian}
If $\alpha, \beta$ and $\gamma$ are the largest eigenvalues of Hermitian matrices $\mathbf A, \mathbf B$ and $\mathbf A+\mathbf B$ respectively, then $\gamma \leq \alpha+\beta$.
\end{theorem}
Clearly, by~\eqref{equation:matrix_u}, we have
\[
\mathbf U=(c^t j+j^tc)-2\langle \mathbf v_i, \mathbf v_j\rangle +\mathbf I,
\]
where $c=(c_1,\dots, c_n)$ and $j=(1,\dots,1)$. Obviously, the largest eigenvalues of the matrices $c^t j$ and $j^t c$ do not exceed $nr$.
Therefore, from \hyperref[theorem:sumofHermitian]{Weyl's Inequality} and positivity of the Gram matrix $\langle \mathbf v_i, \mathbf v_j \rangle$, we conclude that $\lambda\leq 2nr+1$. This forces
\begin{equation}
\label{equation:interesting}	
\left(2nr+1\right)^3\geq \lambda^3> \frac{(n-d-1)^3}{(d+1)^2}-(n-d-2),
\end{equation}
and so
\[
	(2c_0x+o(1))^3\geq (x-1)^3-(x-1-o(1)),
\]
where $x=n/(d+1)$. Hence we get $x=O(1)/(1-2c_0)$, and thus $n=O(d)/(1-2c_0)$.
\end{proof}
\begin{remark}
Interestingly, an almost-equidistant set lying in a $d$-dimensional ball of radius $1/\sqrt{2}$ has at most $2d+4$ points. Indeed, in that case $r$ is equal to $0$, and hence \eqref{equation:interesting} yields $n<2d+2$ if $\mathbf U$ has an eigenvalue larger then $1$. Therefore, by \hyperref[corollary:useful]{Corollary~\ref*{corollary:useful}}, we get $n\leq 2d+4$.
\end{remark}
\section{Almost-equidistant sets: general case}
\label{section:newproof}

\begin{theorem}[Kupavskii, Mustafa, Swanepoel, 2018]
\label{theorem:kms17_4/3}
The size of an almost-equidistant set in $\mathbb R^d$ is $O(d^{4/3})$.
\end{theorem}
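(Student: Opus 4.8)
The plan is to assume the almost-equidistant set $\{\mathbf v_1,\dots,\mathbf v_n\}$ is large and to derive $n=O(d^{4/3})$ by confronting a lower bound and an upper bound for the largest eigenvalue $\lambda$ of the matrix $\mathbf U$. By \hyperref[lemma:eigenvalues]{Lemma~\ref*{lemma:eigenvalues}} and \hyperref[corollary:useful]{Corollary~\ref*{corollary:useful}}, I may assume that $n\ge 2d+2$ and that $\mathbf U$ has exactly one eigenvalue $\lambda>1$; otherwise $n\le 2d+4$. I would first translate the set so that its centroid is the origin and put $c_i:=\|\mathbf v_i\|^2-1/2$ and $x:=\max_i|c_i|$. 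With the centroid at the origin, \hyperref[theorem:centermass]{Theorem~\ref*{theorem:centermass}} applied to the set and itself expresses each $c_i$ through the $i$-th row sum $s_i:=\sum_j\mathbf U_{ij}$; a direct computation gives $|c_i|\le(3\rho+1)/(2n)$, where $\rho:=\max_i|s_i|$, and hence $nx\lesssim\rho$.

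Next I would assemble three estimates. First, \hyperref[lemma:mainlemma]{Lemma~\ref*{lemma:mainlemma}} (\textit{case 3}) applied to $\mathbf U$ (with $n-d-2$ eigenvalues equal to $1$, so $k=d+1$) gives
\[
\lambda^3>\frac{(n-d-1)^3}{(d+1)^2}-(n-d-2),
\]
whence $\lambda\gtrsim n\,d^{-2/3}$ as soon as $n\gtrsim d$. Second, exactly as in the proof of \hyperref[theorem:smallspheres]{Theorem~\ref*{theorem:smallspheres}}, the decomposition $\mathbf U=(c^tj+j^tc)-2\langle\mathbf v_i,\mathbf v_j\rangle+\mathbf I$ together with \hyperref[theorem:sumofHermitian]{Weyl's Inequality} and the positive semidefiniteness of the Gram matrix yields $\lambda\le 2nx+1$. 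Third, I would apply \hyperref[lemma:end]{Lemma~\ref*{lemma:end}} at every vertex: the non-neighbours of a fixed $\mathbf v_{i_0}$ form a unit simplex and all points involved satisfy \eqref{equation:radius} with the common value $x$, so the lemma bounds $|s_{i_0}|$ and therefore
\[
\rho\le c\left(d^{1/2}+dx^{1/2}+dx\right),\qquad\text{so that}\qquad nx\lesssim d^{1/2}+dx^{1/2}+dx.
\]

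It remains to eliminate $x$. If $x\ge 1$, then $d^{1/2}\le dx^{1/2}\le dx$, the last display collapses to $nx\lesssim dx$, and $n=O(d)$. If $x<1$, then $nx\lesssim d^{1/2}+dx^{1/2}$, while the first two estimates give $n\,d^{-2/3}\lesssim\lambda\le 2nx+1\lesssim d^{1/2}+dx^{1/2}$, that is $n\lesssim d^{7/6}+d^{5/3}x^{1/2}$. When $d^{1/2}$ dominates in $nx\lesssim d^{1/2}+dx^{1/2}$ (equivalently $x\le 1/d$) the second inequality already gives $n\lesssim d^{7/6}$; otherwise $nx\lesssim dx^{1/2}$ gives $nx^{1/2}\lesssim d$, and multiplying $n\lesssim d^{5/3}x^{1/2}$ by $n\lesssim d\,x^{-1/2}$ removes $x$ and yields $n^2\lesssim d^{8/3}$, i.e.\ $n=O(d^{4/3})$. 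In all cases $n=O(d^{4/3})$.

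I expect the main obstacle to be the third estimate, namely controlling all $n$ row sums of $\mathbf U$ simultaneously and linking them back to the single deviation parameter $x$. This is exactly the role of \hyperref[lemma:end]{Lemma~\ref*{lemma:end}} (which in turn rests on \hyperref[theorem:centermass]{Theorem~\ref*{theorem:centermass}}), and the delicate point is that the same $x$ must govern both the Weyl upper bound for $\lambda$ and the row-sum bound for $\rho$; once both are in hand against the cubic lower bound from \hyperref[lemma:mainlemma]{Lemma~\ref*{lemma:mainlemma}}, the exponent $4/3$ emerges from the elementary elimination of $x$ above.
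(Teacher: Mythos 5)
Your proposal is correct and takes essentially the same route as the paper: the paper's $f$ is your $\rho$, its inequality \eqref{equation:radius2} is your bound $nx\lesssim\rho$, its application of Lemma~\ref{lemma:end} yields the same self-referential estimate, and it then invokes Theorem~\ref{theorem:smallspheres} --- whose proof is precisely your sandwich of the Lemma~\ref{lemma:mainlemma} (case 3) lower bound against the Weyl upper bound $\lambda\le 2nx+1$ --- to reach a contradiction at the threshold $n\ge Cd^{4/3}$. The only difference is organizational: you inline the content of Theorem~\ref{theorem:smallspheres} and eliminate $x$ by an explicit case analysis, whereas the paper cites that theorem as a black box.
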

\begin{proof}
Assume that $\{\mathbf v_1,\dots, \mathbf v_n\}$ is an almost-equidistant set in $\mathbb R^d$. Let
\begin{equation}
\label{equation:definition_of_f}
f:=\max_{i=1,\dots,n}\Bigl\{\Bigl| \sum_{j=1}^n\left(\|\mathbf v_i-\mathbf v_j\|^2-1\right)\Bigr|\Bigr\}.
\end{equation}
Assuming that $\sum_{i=1}^n \mathbf v_i$ is the origin $\mathbf o$, we easily get
\[
-f\leq n\|\mathbf v_i\|^2 +\sum_{j=1}^n\|\mathbf v_j\|^2- n\leq f
\]
for $1\leq i\leq n$. Summing up these inequalities for $1\leq i\leq n$, we obtain
\[
-f/2\leq \sum_{i=1}^n\|\mathbf v_i\|^2 - n/2\leq f/2.
\]
The last two inequalities implies
\begin{equation}
\label{equation:radius2}
-3f/(2n)\leq \|\mathbf v_i\|^2-1/2\leq 3f/(2n)
\end{equation}
for $1\leq i\leq n$. By \eqref{equation:definition_of_f}, \eqref{equation:radius2} and \hyperref[lemma:end]{Lemma~\ref*{lemma:end}}, we have
\[
	f\leq O(1)\cdot\left(d^{1/2}+d(f/n)^{1/2}+d(f/n)\right).
\]
Therefore, we obtain either $n=O(d)$ or
\begin{equation}
\label{equation:assumption}
f=O(1)\cdot \left(d^{1/2}+d^2/n\right).
\end{equation}

Suppose to the contrary that $n\geq Cd^{4/3}$, where $C$ is a positive constant. Hence, by~\eqref{equation:radius2} and~\eqref{equation:assumption}, the almost-equidistant set lies in a ball of radius $1/\sqrt{2}+O(d^{-2/3})/C^2$. Hence if $C$ is big enough, then we can apply \hyperref[theorem:smallspheres]{Theorem~\ref*{theorem:smallspheres}} to the almost-equidistant set, and thus $n=O(d)$, a contradiction.
\end{proof}

\begin{remark}
It is possible to prove \hyperref[theorem:kms17_4/3]{Theorem~\ref*{theorem:kms17_4/3}} using \hyperref[lemma:end]{Lemma~\ref*{lemma:end}} combined with the Gershgorin Circle Theorem~\cite{G31}; see~\cite{Pol17}*{Section 3.2} for a similar argument.
\end{remark}
\section{Discussion}
\label{section:discussion}
\subsection{Almost-equidistant diameter sets.}
A graph $(V,E)$ is called a \textit{diameter graph} if its vertex set $V\subseteq \mathbb R^d$ is a set of points of diameter $1$ and a pair of vertices forms an edge if they are at unit distance apart. Of course, the set of vertices of two cliques in a diameter graph is an almost-equidistant diameter set. For instance, in \cite{KP17}*{the last paragraph of Section~3} there is given an example of diameter graph in $\mathbb R^d$ consisting of two cliques \textit{without common vertices} such that they have $d+1$ and $\lfloor \frac{d+1}{2}\rfloor$ vertices respectively. We believe that the vertex set of this diameter graph has the maximal size among almost-equidistant diameter sets in $\mathbb R^d$. 
\begin{conjecture}
	\label{conjecture:1}
	An almost-equidistant diameter set in $\mathbb R^d$ has at most $\left\lfloor\frac{3(d+1)}{2}\right\rfloor$ points.
\end{conjecture}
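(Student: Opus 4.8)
The plan is to translate the statement into the language of diameter graphs and then to attack the resulting regimes separately. A set of diameter $1$ is almost-equidistant precisely when its diameter graph $G$, whose edges are the pairs at unit distance, has no independent set of size three; equivalently, the complement $\overline{G}$ is triangle-free. Two structural facts should form the backbone of the argument. First, the non-neighbours of any vertex $\mathbf v$ in $G$ are pairwise adjacent: if $\mathbf a$ and $\mathbf b$ are both at distance less than $1$ from $\mathbf v$, then the triple $\{\mathbf v,\mathbf a,\mathbf b\}$ forces $\|\mathbf a-\mathbf b\|=1$, so the non-neighbourhood of every vertex is a clique. Second, a clique in a diameter graph is a set of points pairwise at the diameter distance, i.e.\ a regular unit simplex, and such a set has at most $d+1$ points. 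Thus every vertex has at most $d+1$ non-neighbours, and the whole task is to bound $|V|$ for a diameter graph with $\alpha(G)\le 2$.

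Before committing to geometry I would record why the spectral method of \hyperref[theorem:diameter]{Theorem~\ref*{theorem:diameter}} cannot by itself reach the conjectured bound. When $\overline{G}$ is bipartite the vertex set splits into two cliques, and the matrix $\mathbf U$ from~\eqref{equation:matrix_u} becomes block-antidiagonal with non-positive off-diagonal blocks, since squared distances within a clique equal $1$; its spectrum is then symmetric about $0$. Combined with \hyperref[lemma:eigenvalues]{Lemma~\ref*{lemma:eigenvalues}}, which forces the eigenvalue $1$ to have multiplicity at least $n-d-2$, this symmetry produces an equal multiplicity of the eigenvalue $-1$ and hence only $n\le 2d+4$ again. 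So a genuinely geometric input is required to descend from $2d$ to $\tfrac32 d$.

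The bipartite regime should then be isolated as a clean geometric lemma: if $A$ and $B$ are regular unit simplices whose union has diameter $1$, then $|A|+|B|\le\lfloor 3(d+1)/2\rfloor$. Here I would invoke the barycentre identity of \hyperref[theorem:centermass]{Theorem~\ref*{theorem:centermass}}: writing $\mathbf a$ for the centroid of $A$ and using $\|\mathbf b-\mathbf a_i\|\le 1$ for every vertex $\mathbf b$ of $B$, one bounds $\|\mathbf b-\mathbf a\|$ and confines $B$ to a ball about $\mathbf a$; comparing its radius with the circumradius $\sqrt{(|B|-1)/(2|B|)}$ of $B$, and symmetrically exchanging the roles of $A$ and $B$, should constrain the pair $(|A|,|B|)$. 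The extremal pair of opposed simplices from~\cite{KP17} shows which linear relation between $|A|$ and $|B|$ must be extracted, and I expect the sharp inequality to come from a rank count for the Gram matrix of $A\cup B$ rather than from the crude aggregate distance bound alone.

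The main obstacle is the non-bipartite case. As $\overline{G}$ is triangle-free, a non-bipartite component has odd girth at least $5$, and the prototype is the regular pentagon, whose diameter graph is $C_5$ and which is an almost-equidistant diameter set of five points in the plane. Since $5>\lfloor 3(2+1)/2\rfloor=4$, these pentagonal configurations already defeat the bound in dimension two, so the statement can hold only for $d\ge 3$, and only if such configurations fail to compete with two simplices once there is room to spread out. The crux is therefore a rigidity statement asserting that a $C_5$-type subconfiguration is essentially planar and cannot be enlarged, inside an $\alpha(G)\le 2$ diameter graph, beyond the two-clique count when $d\ge 3$. I would approach this by analysing how a pentagon can attach to the remaining points through the clique constraint on non-neighbourhoods, but I expect this interaction — rather than the reduction or the bipartite lemma — to be the genuinely hard part.
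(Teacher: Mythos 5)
The statement you set out to prove is not proved anywhere in the paper --- it is stated there as an open conjecture, and the paper's strongest result on almost-equidistant diameter sets, \hyperref[theorem:diameter]{Theorem~\ref*{theorem:diameter}}, gives only the bound $2d+4$. So there is no proof of the paper's to compare yours against; the only question is whether your outline is itself a proof, and it is not. Your reduction (the complement of the diameter graph is triangle-free, non-neighbourhoods are cliques, cliques are unit simplices with at most $d+1$ vertices) is correct, and so is your explanation of why the spectral method stalls at $2d+4$ in the two-clique case. But both pillars that would have to carry the argument are unproven. The bipartite pillar is exactly the disjoint-simplices case of \hyperref[conjecture:schur]{Conjecture~\ref*{conjecture:schur}}: if two disjoint unit simplices with $k\ge m$ vertices forming a diameter-$1$ set must satisfy $k+2m\le 2d+2$, then $m\le d+1-k/2$ and hence $k+m\le d+1+k/2\le \lfloor 3(d+1)/2\rfloor$; your ``clean geometric lemma'' is precisely this case of Schur's conjecture, which the paper notes is confirmed only for $(k,m,d)=(d,d,d)$ and $(5,3,4)$, each requiring a separate paper. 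Worse, the specific barycentre route you sketch provably yields nothing: from $\|\mathbf b-\mathbf a_i\|\le 1$ for all $i$ and \hyperref[theorem:centermass]{Theorem~\ref*{theorem:centermass}} one gets $\|\mathbf b-\mathbf a\|^2\le (k+1)/(2k)$ for the centroid $\mathbf a$ of $A$, while the circumradius of the unit simplex $B$ satisfies $\sqrt{(m-1)/(2m)}<1/\sqrt{2}<\sqrt{(k+1)/(2k)}$, so the comparison of radii is vacuous for every pair $(k,m)$ and no constraint on $|A|+|B|$ follows; the symmetric comparison fails identically. The non-bipartite pillar (rigidity of odd-girth configurations for $d\ge 3$) you leave open explicitly, and it is not even formulated as a precise claim.

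There is also a more basic obstruction: your own pentagon observation refutes the statement as literally quoted. The regular pentagon with unit diagonals is an almost-equidistant set of diameter $1$ with $5$ points in $\mathbb R^2$, while $\lfloor 3(2+1)/2\rfloor=4$. Hence the conjecture, read for all $d$, is false, and no proof of it can exist; any correct result must assume $d\ge 3$ or weaken the bound in low dimensions. This is a genuinely useful remark --- it identifies a hypothesis missing from the paper's formulation --- but it also means that your plan, even if both missing lemmas were somehow established, would prove an amended statement rather than the one quoted.
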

 There is the following conjecture~\cite{Kal15}*{Conjecture~5.5} that arose in the context of study of cliques in diameter graphs.
\begin{conjecture}[Schur]
	\label{conjecture:schur}
	Let $S_1$ and $S_2$ be two unit simplices in $\mathbb R^d$ forming a set of diameter $1$ such that they have $k$ and $m$ vertices respectively. Then $S_1$ and $S_2$ share at least $\min \{0, k+2m-2d-2\}$ vertices for $k\geq m$.
\end{conjecture}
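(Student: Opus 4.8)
The plan is to first recast the conjecture in a form adapted to the tools of this paper. Reading the bound in its nontrivial form $\max\{0,k+2m-2d-2\}$ (the stated $\min$ being vacuous, since a count of shared vertices is nonnegative), and writing $s:=|S_1\cap S_2|$ and $n:=|S_1\cup S_2|=k+m-s$, the assertion $s\ge k+2m-2d-2$ is equivalent to the bound $n\le 2d+2-m$ on the number of distinct points. Since any three points of $S_1\cup S_2$ contain two vertices of a common simplex, the union is an almost-equidistant diameter set, so Theorem~\ref{theorem:diameter} already gives $n\le 2d+4$; the entire content of the conjecture is to sharpen this by the term $-(m+2)$ using the extra hypothesis that $V$ splits into exactly two cliques.

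Next I would exploit the block structure of the matrix $\mathbf U$ from \eqref{equation:matrix_u}. Ordering the points as shared, then $S_1$-only, then $S_2$-only, every entry of $\mathbf U$ indexed by two vertices of a common simplex vanishes; hence the $s$ rows and columns of the shared vertices are zero, and the only nonzero off-diagonal block is a matrix $B$ with nonpositive entries (by the diameter condition) coupling the private vertices of $S_1$ to those of $S_2$. Thus $\mathbf U$ is orthogonally similar to a bipartite-type matrix whose spectrum is $\{0\}$ together with the pairs $\pm\sigma_i$, the $\sigma_i$ being the singular values of $B$; this makes $\tr\mathbf U=\tr\mathbf U^3=0$ transparent. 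I would then feed this spectrum into Lemma~\ref{lemma:eigenvalues} and Lemma~\ref{lemma:mainlemma}, invoking the Perron--Frobenius Theorem (Theorem~\ref{theorem:perron}) for $-\mathbf U$ and Weyl's Inequality (Theorem~\ref{theorem:sumofHermitian}) as in Theorems~\ref{theorem:diameter} and~\ref{theorem:smallspheres}, to control the top singular value and the multiplicity of the singular value $1$. The metric input I would add is the barycentric estimate behind Theorem~\ref{theorem:centermass}: any point within distance $1$ of all vertices of $S_2$ lies in the ball of radius $\sqrt{1-R_m^2}$ about the circumcentre of $S_2$, where $R_m$ is the circumradius, and symmetrically for $S_1$; this pins each simplex inside the other's circumscribed ball and should constrain the entries, hence the singular values, of $B$.

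The hard part, and the reason the statement is only conjectural, is the final geometric step. The spectral bookkeeping above, on its own, yields only a bound of the shape $n\le 2d+O(1)$, no stronger than Theorem~\ref{theorem:diameter}. Worse, a pure dimension count is hopeless: the identity $\dim\aff(S_1\cup S_2)=(k-1)+(m-1)-\dim(\aff S_1\cap\aff S_2)$ forces a large common affine subspace, but that subspace need not be spanned by common vertices. Indeed, in the extremal configuration of two cliques on $d+1$ and $\lfloor(d+1)/2\rfloor$ vertices one has $\aff S_2\subseteq\aff S_1$ yet $s=0$. Extracting the missing term $-m$ therefore requires a genuine rigidity statement: that under the diameter-$1$ constraint two unit simplices whose affine hulls overlap in dimension $w$ must in fact share close to $w$ vertices. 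Proving such rigidity — presumably by an inductive argument that projects out the shared vertices, reduces to the disjoint case, and tracks how the diameter constraint degrades under projection — is where I expect the real difficulty to lie, and it is exactly this point that the present spectral method does not reach.
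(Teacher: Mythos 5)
The statement you were asked to prove is not a theorem of the paper at all: it is Schur's conjecture (Conjecture~\ref{conjecture:schur}), which the paper states and leaves open, noting only that it has been verified in the special cases $(k,m,d)=(d,d,d)$ and $(k,m,d)=(5,3,4)$ in the cited works of Kupavskii and Polyanskii. There is therefore no proof in the paper to compare your attempt against, and your write-up, to its credit, does not pretend to give one: you explicitly flag that the final geometric step is missing. The partial content you do assert is correct. The reading $\max\{0,\,k+2m-2d-2\}$ (the printed $\min$ is evidently a typo, since a vertex count is trivially nonnegative), the equivalence of $s\ge k+2m-2d-2$ with $n\le 2d+2-m$ where $n=|S_1\cup S_2|$, the observation that $S_1\cup S_2$ is an almost-equidistant diameter set so that Theorem~\ref{theorem:diameter} yields $n\le 2d+4$, the vanishing of all rows of $\mathbf U$ indexed by shared vertices and the resulting bipartite spectrum $\{0\}\cup\{\pm\sigma_i\}$, and the circumball estimate $\|\mathbf x-\mathbf c\|^2\le 1-R_m^2$ for a point at distance at most $1$ from all vertices of a unit simplex --- all of these check out, and the two-clique example with $d+1$ and $\lfloor(d+1)/2\rfloor$ vertices shows the conjectured bound would be tight.

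The gap you name, however, is not a technical loose end but the entire content of the conjecture. The machinery of this paper --- Lemmas~\ref{lemma:eigenvalues} and~\ref{lemma:mainlemma}, the Perron--Frobenius Theorem~\ref{theorem:perron}, and Weyl's Inequality (Theorem~\ref{theorem:sumofHermitian}) --- sees only traces, sign patterns, and eigenvalue multiplicities of $\mathbf U$; these data are identical whether the $n$ points split into two cliques of sizes $(k,m)$ or $(k',m')$ with $k+m=k'+m'$, so no argument built purely on them can produce a bound depending on $m$ separately, and indeed they cap out at $n\le 2d+O(1)$. Your diagnosis that one needs a rigidity statement converting overlap of affine hulls into actual shared vertices (and your counterexample showing naive dimension counting fails, since $\aff S_2\subseteq\aff S_1$ can hold with $s=0$) is exactly right, and exactly what nobody currently knows how to prove. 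So: a sound reduction and an honest account of where it stops, but not a proof --- consistent with the statement's status as an open conjecture.
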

Clearly, this conjecture is closely related to \hyperref[conjecture:1]{Conjecture~\ref{conjecture:1}}. Note that \hyperref[conjecture:schur]{Conjecture~\ref*{conjecture:schur}} was confirmed in two special (but not trivial!) cases: $(k, m,d)=(d,d,d)$, where $d\geq 2$, in~\cite{KP17} and $(k, m, d)=(5, 3,4)$ in~\cite{KP217}.

\subsection{Two-distant almost-equidistant sets.}  A subset of $\mathbb R^d$ is called a \textit{two-distant set} if there are only two distances formed by any two distinct points of the set. The following question seems to be interesting.
\begin{problem}
	\label{problem:problem_two-distance}
	What is the largest cardinality of a set in $\mathbb R^d$ that is two-distant and almost-equidistant at the same time?
\end{problem}
Consider a two-distant almost-equidistant set $V\subseteq\mathbb R^d$ with distances $1$ and $a$ between points of $V$, where $a>1$; the case $a<1$ is not interesting because of \hyperref[theorem:diameter]{Theorem~\ref{theorem:diameter}}. \hyperref[lemma:eigenvalues]{Lemma~\ref*{lemma:eigenvalues}} and \hyperref[corollary:useful]{Corollary~\ref*{corollary:useful}} imply that we may assume without loss of generality that $\mathbf U$ for $V$ has exactly one eigenvalue $\lambda$ larger than $1$. Note that the matrix $\mathbf U/(a^2-1)$ is the adjacency matrix of a triangle-free graph. By \hyperref[lemma:eigenvalues]{Lemma~\ref*{lemma:eigenvalues}}, \hyperref[problem:problem_two-distance]{Problem~\ref*{problem:problem_two-distance}} is reduced to following question.
\begin{problem}
	What is the minimal rank of the matrix $\mathbf A-\lambda_2 \mathbf I_n$, where $\mathbf A$ is the adjacency matrix of a triangle-free graph on $n$ vertices and $\lambda_2$ is its second largest eigenvalue provided that $\lambda_2>0$?
\end{problem}
It is worth pointing out that for every positive integer $n$ there are a triangle-free graph on $n$ vertices and an eigenvalue $\lambda$ of the adjecancy matrix $\mathbf A$ of the graph such that $\rank(\mathbf A-\lambda \mathbf I)=O(n^{3/4})$ and $\lambda>0$; see~\cite{CPR00}*{the proof of Theorem~5, pages 94--95}. 
\section*{Acknowledgement}
We are grateful to \href{http://www.zilin.one}{Zilin Jiang} for bringing the \hyperref[theorem:perron]{Perron--Frobenius Theorem} to our attention. The author was supported by the Russian Foundation for Basic Research, grant \textnumero~18-31-00149 mol\_a.

\bibliographystyle{amsplain}
\bibliography{biblio}
\end{document}